\documentclass[reqno,12pt]{amsart}

\usepackage{amsmath}
\usepackage{amsfonts}
\usepackage{amsthm}
\usepackage{amssymb}
\usepackage{array}
\usepackage{booktabs}
\usepackage{color}
\usepackage{changepage}
\usepackage[english]{babel}
\usepackage{enumerate}
\usepackage{epsfig}
\usepackage{float}
\usepackage{framed}
\usepackage{gensymb}
\usepackage{graphicx}
\usepackage{hyperref}
\usepackage{indentfirst}
\usepackage{longtable}
\usepackage{mathtools}
\usepackage{multirow}
\usepackage[normalem]{ulem}
\usepackage{setspace}
\usepackage{subcaption}
\usepackage{textcomp}
\usepackage[utf8]{inputenc}
\usepackage{verbatim} 
\usepackage{enumerate}
\usepackage{xlop}
\usepackage{listings}
\usepackage[top=1 in,bottom=1in, left=1 in, right=1 in]{geometry}

\newtheorem{theorem}{Theorem}[section]

\newtheorem{proposition}[theorem]{Proposition}

\newtheorem{conjecture}[theorem]{Conjecture}
\numberwithin{equation}{section}

\def\and{%
  \end{tabular}%
  \hskip 1em \@plus.17fil%
  \begin{tabular}[t]{c}}%
  
\lstset{basicstyle=\ttfamily,breaklines=true,upquote=true,showstringspaces=false}

\begin{document}

\title{On Banzhaf and Shapley-Shubik Fixed Points and Divisor Voting Systems}

\author[Arnell, Chen, Choi, Marinov, Polina, Prakash]{Alex Arnell, Richard Chen, Evelyn Choi, Miroslav Marinov, Nastia Polina, Aaryan Prakash \\ Research Mentor: Joshua Zelinsky }

\date{October 16, 2020}

\maketitle

\noindent

\setcounter{section}{0}

\begin{abstract}
    The Banzhaf and Shapley-Shubik power indices were first introduced to measure the power of voters in a weighted voting system. Given a weighted voting system, the fixed point of such a system is found by continually reassigning each voter's weight with its power index until the system can no longer be changed by the operation. We characterize all fixed points under the Shapley-Shubik power index of the form $(a,b,\ldots,b)$ and give an algebraic equation which can verify in principle whether a point of this form is fixed for Banzhaf; we also generate Shapley-Shubik fixed classes of the form $(a,a,b,\ldots,b)$.  We also investigate the indices of divisor voting systems of abundant numbers and prove that the Banzhaf and Shapley-Shubik indices differ for some cases.  
    
\end{abstract}

\section{Introduction}
In a weighted voting system, voters, or \textit{players}, have different amounts of the total votes, which are called \textit{weights}. A \textit{motion} is an agenda item that needs some amount of votes to be passed. This amount is called the \textit{quota}. A \textit{coalition} is a group of players, and the sum of all votes in a coalition is called the total voting power. A coalition is a \textit{winning coalition} if its total voting power meets or exceeds the quota. A player is called a \textit{critical player} in a winning coalition if the coalition would not be winning without the player. 

We examine the \textit{Banzhaf power index} \cite{Banzhaf} and the \textit{Shapley-Shubik power index} \cite{Shapley-Shubik}, which are two different methods of measuring a player's strength in a system. The Banzhaf power index of a player is the number of times that player is a critical player in all winning coalitions divided by the number of total times any player is a critical player.
The Shapley-Shubik index looks at permutations of all players in a system, called \textit{sequential coalitions}. We sum each player's votes starting from the beginning of a sequential coalition, and see if the sum reaches the quota as we progress. The player whose votes first cause this sum to meet or exceed the quota is called a \textit{pivotal player}. The Shapley-Shubik power index of a player is the number of times that player is a pivotal player divided by the total number sequential coalitions.

We present results in two different directions. The first is slightly more number-theoretic, while the second relies heavily on modelling with suitable equivalent algebraic equations. 
\medskip

For an integer $n> 1$, the \textit{divisor voting system of $n$} is $[Q: n, d_k, \ldots, d_1, 1]$ where $d_1,\ldots,d_k$ are the divisors of $n$ distinct from $1$ and $n$; also $Q = (\sigma(n)+1)/2$ when $\sigma(n)$ is even and $Q= \sigma(n)/2$ when $\sigma(n)$ is odd -- so the quota is essentially a simple majority rule. Here $\sigma(n)$ denotes the sum of the divisors of $n$.

In Section 2 we will show that for any positive integer $n$ where $\sigma(n)\geq 2n$, the Banzhaf power index and Shapley-Shubik index disagree on at least one
divisor for the divisor voting system of $n$ for all $n$ such that $\sigma(n) = 2n+k$, $0\leq k \leq 5$. It is unlikely that such an approach would work in the general case and so it is more plausible to examine an inductive argument of the following sort -- given $n$, what can we say about $pn$ and $mn$, where $p$, $m$ are primes? We have some progress towards this approach in Section $2$.

\medskip 

Now let us introduce the algebraic direction of our research in Section 3.

Consider systems of the form $V = [1/2_s : a_1,\ldots, a_n]$ where $a_1,\ldots,a_n \geq 0$ are real numbers that sum to $1$ and all winning coalitions are those whose sum \textit{strictly exceeds} $1/2$. We can evaluate the Banzhaf and Shapley-Shubik powers of each player, thus obtaining tuples $BB(V)$ and $SS(V)$. We say that $V$ is a \textit{Banzhaf fixed point} if $V = BB(V)$ and a \textit{Shapley-Shubik fixed point} if $V = SS(V)$. A fixed point is \textit{primitive} if there is no player $i$ with $a_i = 0$. A fixed point is \textit{non-trivial} if not all players have the same power. 
    
In Section $3$ we derive algebraic equations whose solutions generate fixed points and in particular in Section $3.2$ we give new classes of Shapley-Shubik fixed points. We then categorize all fixed points under Shapley-Shubik and use this to show that there is only one fixed point for both Banzhaf and Shapley-Shubik. Note that in general the problem is difficult to model with an equation and so we will restrict to the (already far from trivial) tuples of one of the forms $(a,b,\ldots,b)$ and $(a,a,b,\ldots,b)$.

\section{Divisor Voting Systems and Abundant Numbers}
Our first main aim in this section is to prove the following.

\begin{proposition}
The Banzhaf power index and Shapley-Shubik index are different for at least one divisor for the divisor voting system of $n$ when $\sigma(n) = 2n+k$, $0\leq k \leq 5$.
\end{proposition}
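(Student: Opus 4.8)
The plan is to treat the heavy player $n$ as a near-dictator and reduce both indices to counting subsets of the set $F$ of proper divisors of $n$ (the players other than $n$). The weights in $F$ sum to $\sigma(n)-n = n+k$, and since the quota is a strict majority a coalition wins exactly when its weight exceeds $n+k/2$. The structural backbone is that, because $0 \le k \le 5$, player $n$ belongs to every winning coalition except a short explicit list: a coalition avoiding $n$ wins iff its complement inside $F$ has weight less than $k/2$, which forces that complement to be empty (giving only $F$ itself, for $k=1,2$), to be $\emptyset$ or $\{1\}$ (for $k=3,4$), or to be $\emptyset$, $\{1\}$, or $\{2\}$ (for $k=5$); for $k=0$ no $n$-free coalition wins. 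Set $m = |F| = \tau(n)-1$.

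Using this, I would compute the two indices of $n$. A subset $S\subseteq F$ makes $n$ Banzhaf-critical exactly when $\{n\}\cup S$ wins but $S$ does not, so the critical count is $2^m$ minus a bounded, explicitly listed set of boundary subsets. Player $n$ is Shapley--Shubik-pivotal in a permutation iff its predecessor set $S$ satisfies the same weight condition, and summing $|S|!\,(m-|S|)!$ over the admissible $S$ collapses, via $\binom{m}{j}j!(m-j)! = m!$, to $(m+1)!$ minus the analogous bounded correction. Both indices of $n$ thus become explicit functions of $m$, and $\beta_n = \phi_n$ reduces to an equation $2^m = P_k(m)$ with $P_k$ an explicit low-degree polynomial. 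For example $k=0$ gives $\beta_n = (2^m-1)/(2^m-1+m)$ and $\phi_n = m/(m+1)$, so equality means $2^m = m^2+1$, impossible for $m\ge 2$; since $\sigma(n)=2n$ forces $\tau(n)\ge 4$, the indices already disagree.

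For each $k$ I would solve $2^m = P_k(m)$ over the integers: the exponential dominates, so there are only finitely many solutions, all small, found by inspection, and off this finite set $\beta_n \ne \phi_n$. The subtle part is the coincidental values of $m$, where the index of $n$ cannot separate the two measures (and for $k=1$ every player's indices in fact coincide there). Here I would exploit two further levers. First, for $k\ge 2$ the smallest divisors cease to be interchangeable --- the weight-$1$ player, and for $k=5$ the weight-$2$ player, are singled out by the criticality threshold and the exceptional coalitions --- so comparing players $1$ and $2$ yields an independent separating relation (for $k=2$, for instance, $\beta_2/\beta_1 = 3$ while $\phi_2/\phi_1 = 2+2/(m-1)$, which agree only at $m=3$). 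Second, any $m$ that still survives pins $\tau(n)$ to a small value, so $n$ has a very restricted shape ($p^a$, $pq$, $p^2q$, \dots) and $\sigma(n)=2n+k$ becomes an elementary Diophantine condition with no solutions.

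The main obstacle is exactly this degeneration: the clean comparison on player $n$ collapses at a handful of small $m$, and at some of them every power index genuinely coincides, so the argument cannot stay purely index-theoretic and must end with number-theoretic non-existence checks. Carrying out the bookkeeping of the exceptional coalitions for $k=3,4,5$, where the lightest players split into several non-symmetric classes, together with this finite case analysis of $\sigma(n)=2n+k$ for small $\tau(n)$, is where the genuine effort concentrates.
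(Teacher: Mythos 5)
Your plan follows essentially the same route as the paper's proof: casework on $k$, explicit critical/pivotal counts exploiting that $n$ is a near-dictator (so the $n$-free winning coalitions and the boundary subsets form a short explicit list), reduction to an exponential-versus-polynomial Diophantine equation in the number of divisors, and elimination of the finitely many small solutions. The only tactical differences are that you primarily compare the two indices of the heavy player $n$ (falling back on comparing players $1$ and $2$), where the paper compares a well-chosen small divisor directly in each case, and that you finish off the surviving small values of $m$ by solving $\sigma(n)=2n+k$ for each small divisor count, where the paper instead invokes the single uniform lemma that an abundant number has at least six divisors.
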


An integer $n$ is called \textit{abundant} if $\sigma(n) > 2n$. We shall sometimes refer to the following auxilliary observation.

\begin{proposition}
\label{abundantdivisors}
Any abundant number must have more than 5 divisors, and the only abundant number with 6 divisors is 20.
\end{proposition}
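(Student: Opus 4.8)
The plan is to argue by cases on the number of divisors $\tau(n) = \prod_i (a_i + 1)$, where $n = \prod_i p_i^{a_i}$ is the prime factorization. The key reduction is to work with the abundancy index rather than with $\sigma$ directly: since both $\sigma$ and the identity are multiplicative,
\[
\frac{\sigma(n)}{n} = \prod_i \left(1 + \frac{1}{p_i} + \cdots + \frac{1}{p_i^{a_i}}\right),
\]
and $n$ is abundant exactly when this product exceeds $2$. Each factor is increasing in the exponent $a_i$ and decreasing in the prime $p_i$, so within any fixed factorization shape the index is maximized by taking the primes as small as possible. This monotonicity is what turns the problem into a finite check.

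First I would dispose of $\tau(n) \le 5$. The possible shapes are $p,\ p^2,\ p^3,\ p^4,$ and $pq$, and for each I would bound the index by its value on the smallest admissible primes. For the prime-power shapes the index stays strictly below $2$ even at $p=2$ (for instance $\sigma(2^4)/2^4 = 31/16$), and for $pq$ the extreme case $p=2,\ q=3$ gives exactly $2$ (the perfect number $6$), every other choice being strictly smaller. Hence no number with at most five divisors is abundant, which establishes the first assertion.

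The substance is the case $\tau(n)=6$, where the shapes are $p^5$ and $p^2 q$ with $p \neq q$. The shape $p^5$ is ruled out immediately, since $\sigma(2^5)/2^5 = 63/32 < 2$. For $p^2 q$ I would write the abundancy condition as $\left(1 + \tfrac1p + \tfrac1{p^2}\right)\!\left(1 + \tfrac1q\right) > 2$ and bound the first factor: it equals $7/4$ when $p=2$, equals $13/9$ when $p=3$, and is at most $31/25$ once $p \ge 5$. Feeding each value into the inequality forces $q$ into a small finite range and rules out $p \ge 5$ altogether (there the product cannot reach $2$), leaving only a handful of pairs $(p,q)$ to test directly.

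The main obstacle is essentially bookkeeping: making the bound on the leading factor sharp enough that only finitely many $q$ survive for each small $p$, and being careful at the boundary where the index equals $2$ exactly, so that the perfect numbers $6$ and $28$ are excluded rather than miscounted as abundant. Once the surviving pairs are enumerated and the perfect cases discarded, the remaining direct computation isolates the abundant numbers with six divisors, completing the proof.
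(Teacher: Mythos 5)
Your method is sound and in fact sharper than the paper's: you work with the exact abundancy index $\sigma(n)/n=\prod_i\bigl(1+\tfrac{1}{p_i}+\cdots+\tfrac{1}{p_i^{a_i}}\bigr)$, whereas the paper bounds it from above by $n/\phi(n)=\prod_{p\mid n}\tfrac{p}{p-1}$, which constrains only the set of primes dividing $n$ and not the exponents. Your treatment of the first claim (no abundant number has five or fewer divisors) is correct and complete in outline: the shapes $p,p^2,p^3,p^4$ have index strictly below $2$, and the shape $pq$ has index at most $\tfrac32\cdot\tfrac43=2$, with equality only at the perfect number $6$.

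The problem is the final step you defer. Carrying out the finite check for $\tau(n)=6$ with shape $p^2q$: for $p=2$ the condition $\tfrac74\bigl(1+\tfrac1q\bigr)>2$ is equivalent to $q<7$, so $q\in\{3,5\}$ and both $12$ and $20$ survive; for $p=3$ the condition $\tfrac{13}{9}\bigl(1+\tfrac1q\bigr)>2$ forces $q=2$, and $18$ survives ($\sigma(18)=39>36$); for $p\ge 5$ nothing survives. The enumeration therefore produces three abundant numbers with six divisors, namely $12$, $18$, and $20$, so the uniqueness claim in the proposition is false and no completion of your plan (or of any plan) can establish it. This is not a defect of your approach; it exposes an error in the statement and in the paper's own proof, which argues that an even abundant number with exactly two prime divisors must have prime divisors $2$ and $3$ and then names $20$ --- whose prime divisors are $2$ and $5$ --- as the unique example, overlooking $12$ and $18$. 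Only the first half of the proposition (every abundant number has at least $6$ divisors) is true, and that is the half the rest of Section~2 actually relies on.
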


\begin{proof}
Let $g: \mathbb{Z}\rightarrow\mathbb{R}$ be the function defined by $g(n) = \frac{n}{\phi{(n)}}$. By substituting Euler's product formula for $\phi{(n)}$, we have $g(n)={\displaystyle \prod_{p \mid n} \frac{p}{p-1}}$. 

Suppose $n$ is an odd abundant number. Then, since $n$ is abundant, $\frac{\sigma{(n)}}{n} > 2$. Since $g(n) > \frac{\sigma{(n)}}{n}$ when $n>1$, $g(n) > 2$. Now we will find the minimum number of divisors $n$ can have. The function $f(x)=\frac{x}{x-1}$ is a decreasing function over the interval $[2, \infty)$. Therefore, the two prime odd numbers for which $\frac{p}{p-1}$ is at its largest values are $3$ and $5$. However, $(\frac{3}{2})(\frac{5}{4}) < 2$. Therefore, $n$ must have at least 3 prime odd divisors to be abundant, or 8 total divisors.

Suppose $n$ is an even abundant number. Since $n$ is abundant, $g(n) >2$. In this case, it is possible for $n$ to have only 2 prime divisors. These prime divisors must be 2 and 3, since $(\frac{2}{1})(\frac{3}{2}) > 2$. 2 and 3 are the only prime divisors for which this is possible, since $f(x) = \frac{x}{x-1}$ is decreasing for $x\geq 2$, and $(\frac{3}{2})(\frac{5}{4}) < 2$. 20 is the only abundant number with only 2 and 3 as prime divisors. Hence, 20 is the only abundant number with 6 divisors, and all other even abundant numbers must have at least 8 divisors.
\end{proof}

\subsection{Casework -- Proof of Proposition $2.1$} 
In this part $d$ is the number of divisors of $n$. We will prove Proposition $2.1$ by breaking it into cases based on the value of $\sigma{(n)}$.

\subsubsection{$\sigma(n)=2n$}
\label{subsubsection:sigma(n)=2n}
\begin{proof}

We will first find the Banzhaf power index of each divisor of $n$ based on the number of divisors $d$. We can calculate the number of times $n$ is a critical player in the divisor voting system of $n$. Consider the total number of possible combinations of the divisors that include $n$, $2^{d-1}$. Since the sum of all divisors excluding $n$ is also $n$, and the quota is $n+1$, the only combination that is not a winning coalition is the coalition that contains only $n$. Hence $n$ is a critical player $2^{d-1} -1$ times.

Every divisor that is not $n$ is a critical player only when it is in the winning coalition consisting of $n$ and itself. So, the Banzhaf index for $n$ is $\frac{2^{d-1}-1}{2^{d-1}+(d-2)}$. The Banzhaf index for any other divisor $d_i$ of $n$ is $\frac{1}{2^{d-1}+(d-2)}$.

We will now calculate the Shapley-Shubik power index for each divisor of $n$. A proper divisor $d_i$ of $n$ can be a pivotal player only once. It is a pivotal player in the sequential coalition where $n$ is the first player followed by $d_i$. Hence, the number of times $d_i$ is a pivotal player is $(d-2)!$. We can now find the number of times $n$ is a pivotal player. There are $d-1$ proper divisors of $n$ and $d!$ total pivotal players, so the number of times $n$ is a pivotal player is $d!-(d-2)!(d-1)$. So $n$ has a Shapley-Shubik index of $\frac{d!-(d-1)!}{d!}$ and every other divisor of $n$ has index  $\frac{(d-2)!}{d!} = \frac{1}{d(d-1)}$. 

We now compare the Shapley-Shubik indices and the Banzhaf indices to show that they differ for at least one divisor of $n$. We can show that each proper divisor of $n$, $d_i$, has a Banzhaf index that differs from its Shapley-Shubik index by setting equal the their formulas we derived. So, we have 
\begin{equation*}
    \frac{1}{2^{d-1}+(d-2)}=\frac{1}{d(d-1)}.
\end{equation*}
The only integer solution to this equation is $d=2$ -- indeed, note that $2^{d-1} > d^2 - 2d + 2$ for $d\geq 6$ by induction, with the inductive step being $2^d = 2 \cdot 2^{d-1} > 2(d^2 - 2d + 2) > d^2 + 2 = (d+1)^2 - 2(d+1) + 2$ (the second inequality is equivalent to $d(d-4) + 2 > 0$). However, since $n$ is an abundant number, it must have more than six divisors by Proposition \ref{abundantdivisors}.
Thus, for every perfect number, at least one of its divisors in its divisor voting system will have a Banzhaf index that differs from its Shapley-Shubik index.
\end{proof}

We followed a similar procedure to find and compare the Banzhaf and Shapley-Shubik power indices for each divisor in the divisor voting system of $n$ where $\sigma(n)=2n+k$. Note that we will mostly omit the derivations of the calculations, as well as reasons why two concrete expressions involving $d$ are equal apart from a few cases, since the ideas are largely the same.

\subsubsection{$\sigma(n)=2n+1$}
Since both the Banzhaf and Shapley-Shubik power indices of 1 are 0, we must compare the Banzhaf and Shapley-Shubik power index formulas for proper divisors $d_i$ that are not 1. Using the same method that used in 2.1.1, we can see that the formula for the Banzhaf index of each $d_i$ is $\frac{2}{2^{d-1}+2(d-2)}$. The formula for the Shapley-Shubik index of each $d_i$ is $\frac{2(d-2)!}{d!} = \frac{2}{d(d-1)}$. These two formulas are equal only when the number of divisors of $n$ is 2, 3, or 4, which is not possible since $n$ is an abundant number. Note that numbers $n$ where $\sigma(n)=2n+1$, are called \textit{quasiperfect numbers}, and it is unknown if any such numbers really exist \cite{quasiperfect}.

\subsubsection{$\sigma(n)=2n+2$}
Both the Banzhaf and Shapley-Shubik power indices of 1 do not equal 0. Therefore we can compare the Banzhaf and Shapley-Shubik indices for 1 to prove that 2.0.1 is true for this case. The formula for the Banzhaf index of 1 is $\frac{1}{2^{d-1}+3(d-2)-2}$ and the formula for the Shapley-Shubik index of 1 is $\frac{(d-2)!}{d!} = \frac{1}{d(d-1)}$. If these two expressions are equal to each other, then $d$ must be 4, but an abundant number must have at least 6 divisors. Hence when $\sigma(n)=2n+2$, the power indices of 1 in Shapley-Shubik and Banzhaf differ from one another.

\subsubsection{$\sigma(n)=2n+3$}
In this case, since both the Banzhaf and Shapley-Shubik power indices of 1 are 0, we must compare the power indices of divisors $d_i$ that are not 1, nor $n$. Then the Banzhaf index of each $d_i$ is $\frac{4}{2^{d-1}+4(d-2)-4}$ and the Shapley-Shubik index of $d_i$ is $\frac{4(d-3)!+2(d-2)!}{d!} = \frac{2}{(d-1)(d-2)}$. There are no integer solutions for when the two indices of $d_i$ are equal -- indeed, we can prove that $2^{d-1} > 2d^2 - 10d+16$ for $d\geq 6$ with induction. Suppose that the inequality is true for $d$. By manipulating this inequality, we have that $2^d = 2\cdot 2^{d-1} > 2(2d^2 - 10d+16) > 4d^2 - 20d - 32 = 2(d+1)^2 - 10(d+1)+16$ (the second inequality is equivalent to $2(d-4)(d-3) > 0$). Therefore, since the inequality is true for $d+1$, it is true for all $d \geq 6$. Hence, since an abundant number must have at least 6 divisors, there are no solutions for when the two indices are equal.

\subsubsection{$\sigma(n)=2n+4$}
We must consider two cases for when $n$ is even or odd. Suppose $n$ is even. Since the index of 1 is not 0, we can simply compare the Banzhaf and Shapley-Shubik indices for 1. The Banzhaf index of 1 is $\frac{1}{2^{d-1}+5(d-3)-1}$. The Shapley-Shubik index of 1 is $\frac{2(d-3)!}{d!} = \frac{2}{d(d-1)(d-2)}$. When we set these two expressions for the Banzhaf and Shapley-Shubik index equal and simplify, we have $d^3+2d-3d^2 = 2^d + 10d-32$. We will show that these expressions differ since $n$ is an abundant number and must have more than 5 divisors.
Let $h: \mathbb{R}\rightarrow\mathbb{R}$ be the function defined by $h(x) = 2^x + 10x-32-(x^3+2x-3x^2)$. Thus $h(x) = 2^x-x^3 + 3x^2 + 8x-32$. By Proposition \ref{abundantdivisors}, an abundant number must have at least 6 divisors. The integers 6, 7, and 8 generate values of $h(x)$ that are not equal to 0. To show that $h(x) > 0$ for values $x\geq 9$, it is sufficient to show that $h(9) > 0$ and $h'(x) = 0$, which is true if $h^{(3)}(9) > 0$ and $h^{(4)}(x)>0$ for all $x\geq 9$.
$h^{(3)}(x) = 2^xln^3(2)-6$, which is greater than 0 for all numbers greater than or equal to 9, and $h^{(4)}(x) = 2^xln^4(2)$, which is clearly greater than 0 for all values of $x$.
Hence the two expressions for the Banzhaf and Shapley-Shubik index of 1 are never equal for an abundant number $n$.

Now suppose $n$ is odd. Since the index of 1 is 0 for both Banzhaf and Shapley-Shubik indices, we must compare the formulas for the indices of proper divisors $d_i$ that are not 1. The formula for the Banzhaf index of each $d_i$ is $\frac{4}{2^{d-1}+4(d-2)-3}$, and the formula for the Shapley-Shubik index of $d_i$ is $\frac{2[(d-2)\cdot (d-2)!]+4(d-2)!}{d!} = \frac{2}{d-1}$. By simplifying $\frac{4}{2^{d-1}+4(d-2)-3} = \frac{2}{d-1}$ we have $2^{d-1}=-2d+9$. However, $2^{d-1} > -2d+9$ for all $d > 5$, since the graph $y=-2x+9$ intersects the x-axis only when $x<5$, and $y=2^{d-1}$ is an increasing function when $x\geq 0$.
Therefore, since $n$ is an abundant number and must have at least $6$ divisors, there are no abundant numbers for which these expressions are equal.

\subsubsection{$\sigma(n)=2n+5$}
We again have two cases to consider for when $n$ is even or odd. Suppose $n$ is even. The Banzhaf and Shapley-Shubik indices of 1 are not 0, so we can simply compare these two indices for 1. The formula for the Banzhaf index of 1 is $\frac{1}{2^{d-1}+5(d-3)-2}$ and the formula for the Shapley-Shubik index of 1 is $\frac{2(d-3)!}{d!} = \frac{2}{d(d-1)(d-2)}$. Equating the expressions for the Banzhaf and Shapley-Shubik indices gives $2^d+10d-34 = d^3+2d-3d^2$. $n$ is an abundant number and thus must have at least 6 divisors by Proposition 2.2. Hence we will show that this equation has no integers solutions greater than 5 using the same method as in 2.1.5. Let $h: \mathbb{R}\rightarrow\mathbb{R}$ be the function defined by $h(x) = 2^x+10x-34 - (x^3+2x-3x^2)$. Hence $h(x) = 2^x-x^3+3x^2+8x-34$. We can manually verify that $h(6)$, $h(7)$, and $h(8)$ are not equal to 0. We will now show that if $x$ is greater than or equal to 9, $h(x)$ is greater than 0. We can demonstrate this by showing that $h^{(3)}(9)>0$ and $h^{(4)}(x) > 0$ for all $x\geq9$. $h^{(3)}(9) = 2^9\ln{(2)}^{3} - 6$, which is greater than 0 and $h^{(4)}(x) = 2^x\ln{(2)}^4$, which is also clearly greater than 0. 
Hence there are no integer solutions for when the Banzhaf index of 1 and the Shapley-Shubik index of 1 are equal.

In the second case, $n$ is odd. Since the Banzhaf and Shapley-Shubik indices of 1 are both 0, we must compare the formulas for the indices of proper divisors $d_i$ where $d_i$ is not 1. The Banzhaf index of each $d_i$ is $\frac{4}{2^{d-1}+4(d-2)-6}$ and the formula for the Shapley-Shubik index of $d_i$ is $\frac{2[(d-2)\cdot(d-2)!]+4(d-2)!}{d!} = \frac{2}{d-1}$. By simplifying $\frac{4}{2^{d-1}+4(d-2)-6} = \frac{2}{d-1}$, we have $2^{d-1}=-2d+12$. Since $f(x)=2^{x-1}$ is an increasing function for $x\geq 0$ and $g(x)=-2x+12$ is a decreasing function that intersects the x-axis when $x=6$, $2^{d-1}>-2d+12$ for all $x\geq 6$.
Hence there are no possible abundant numbers $n$ for when these two expressions are equal. 

\subsection{Forms $pn$ and $mn$}
In order to extend the class of abundant numbers for which Proposition 2.1 is true, we can examine numbers of the form $pn$ and $mn$ where $p$ and $m$ are primes. 

\begin{conjecture}
Let $n$ be an abundant number. If $p$ and $m$ are primes greater than $\sigma{(n)}$, then the divisors of the divisor voting systems of $pn$ and $mn$ have the same Shapley-Shubik power indices as well as the same Banzhaf power indices.
\end{conjecture}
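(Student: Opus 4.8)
To establish the conjecture, the plan is to show that, under the obvious identification of divisors, the divisor voting systems of $pn$ and $mn$ are \emph{the very same} simple game, and then to use that the Banzhaf and Shapley-Shubik indices depend only on the family of winning coalitions. First I would record the structure of the divisors. Since $p, m > \sigma(n) > n$ and $p, m$ are prime, neither divides $n$; hence the divisors of $pn$ are precisely the ``small'' divisors $e$ (those $e \mid n$) together with the ``large'' divisors $pe$ (one for each $e \mid n$), and similarly for $mn$ with $me$ in place of $pe$. The small divisors sum to $\sigma(n)$, the large divisors to $p\,\sigma(n)$, so the total weight is $\sigma(pn)=(p+1)\sigma(n)$; as $p$ is an odd prime this is even, and a coalition wins exactly when its weight strictly exceeds $(p+1)\sigma(n)/2$.

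Write $\sigma=\sigma(n)$ and encode a coalition $S$ of the $pn$-system by the pair $(a,b)$, where $b$ is the sum of the small divisors in $S$ and $a=\sum_{e\in T}e$ for $T=\{e\mid n: pe\in S\}$; then $a,b$ are subset sums of the divisors of $n$, both in $[0,\sigma]$, and the weight of $S$ is $pa+b$. The heart of the argument is to check that the winning condition $pa+b>(p+1)\sigma/2$ is $p$-free once expressed through $(a,b)$. Rewriting it as $p\bigl(a-\tfrac{\sigma}{2}\bigr)>\tfrac{\sigma}{2}-b$ and noting that $|\tfrac{\sigma}{2}-b|\le\tfrac{\sigma}{2}<p$, a short case split on the sign of $a-\tfrac{\sigma}{2}$ shows: if $a>\sigma/2$ the coalition always wins, if $a<\sigma/2$ it always loses, and if $a=\sigma/2$ (possible only for even $\sigma$) it wins iff $b>\sigma/2$. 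Since this derivation uses only $p>\sigma$, the identical rule governs the $mn$-system because $m>\sigma$ as well.

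I would then define $\phi$ from the divisors of $pn$ to those of $mn$ by fixing every small divisor and sending each large divisor $pe$ to $me$. This is a bijection, and every coalition $S$ has the same pair $(a,b)$ as its image $\phi(S)$; by the previous step $S$ wins in the $pn$-system iff $\phi(S)$ wins in the $mn$-system, so $\phi$ is an isomorphism of simple games. Because a player is critical in a winning coalition, and pivotal in a sequential coalition, solely as a function of which coalitions win, $\phi$ (with the induced bijection $\pi\mapsto\phi\circ\pi$ on orderings) carries critical instances to critical instances and pivotal instances to pivotal instances; as both systems have the same number of players (namely twice the number of divisors of $n$) the denominators in the two indices also agree. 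Hence each small divisor $e$ has equal indices in $pn$ and $mn$, and each large divisor $pe$ in $pn$ shares its indices with $me$ in $mn$ --- in particular $p$ and $m$ (the case $e=1$), and $pn$ and $mn$ (the case $e=n$), have matching Banzhaf and Shapley-Shubik indices.

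The only real obstacle is the reduction in the second paragraph: one must verify that the majority threshold truly collapses to a condition on $(a,b)$ that is independent of the prime, taking care of the parities of $\sigma(n)$ and of $\sigma(pn)$. Everything afterward is the standard fact that both power indices are invariants of the underlying simple game. I would also emphasize that the argument does more than prove equality of the two lists: it exhibits the common indices as those of a single reduced game on two labelled copies of the divisors of $n$, which is exactly the sort of structural reduction one would hope to iterate in an inductive attack on Proposition 2.1 for arbitrary abundant $n$.
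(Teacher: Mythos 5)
Your argument is correct, and it is worth noting that the paper does not actually prove this statement: it is stated there as a conjecture, and the paper only establishes the much weaker Proposition 2.4 (the systems of $pn$ and $mn$ have the same \emph{number} of winning coalitions), under the slightly stronger hypothesis $p,m>\sigma(n)+1$, explicitly listing the full index statement as future work. Your route differs from, and subsumes, the paper's partial argument. The paper defines the substitution map $md_{c}\mapsto pd_{c}$ on winning coalitions and verifies by an inequality manipulation that it is a bijection between the sets of winning coalitions; that alone says nothing about individual players' indices. You instead show that the winning predicate itself is independent of the prime: encoding a coalition by the pair $(a,b)$ of subset-sums of divisors of $n$ attached to the large and small divisors respectively, the condition $pa+b>\tfrac{(p+1)\sigma(n)}{2}$ collapses (using only $p>\sigma(n)$, integrality of $a$, and $|\,\sigma(n)/2-b\,|\le\sigma(n)/2<p$) to ``$2a>\sigma(n)$, or $2a=\sigma(n)$ and $2b>\sigma(n)$,'' which mentions $p$ nowhere. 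Hence the natural relabelling $pe\mapsto me$ is an isomorphism of simple games, and since criticality and pivotality are defined purely in terms of which coalitions win, both indices are invariants of the simple game and must agree player by player. I checked the three cases of your sign split, including $b=\sigma(n)$ and the boundary value $p=\sigma(n)+1$, and they all go through; you also correctly note $p,m$ are odd (as $\sigma(n)>2n>2$), so $\sigma(pn)$ is even and the quota is genuinely ``strictly more than half.'' This both recovers Proposition 2.4 under the weaker hypothesis $p,m>\sigma(n)$ and settles the conjecture itself; the reduction to a single $p$-free game on two labelled copies of the divisors of $n$ is exactly the structural statement the paper's Section 4 asks for.
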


We will prove a part of this conjecture, which is the following proposition.

\begin{proposition}
Let $n$ be an abundant number and let $p$ and $m$ be primes greater than $\sigma{(n)}+1$. Then the divisor voting systems of $pn$ and $mn$ have the same number of winning coalitions.

\end{proposition}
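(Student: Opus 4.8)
The plan is to exploit the fact that $p$ and $m$ are so large that, within each system, whether a coalition wins is governed almost entirely by its ``large'' divisors (those divisible by $p$, respectively $m$); this will reduce the number of winning coalitions to a quantity depending only on $n$, and hence independent of which large prime is adjoined.

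First I would record the divisor structure. Since $p > \sigma(n)+1 \ge n$, the prime $p$ does not divide $n$, so the divisors of $pn$ are exactly $D \sqcup pD$, where $D=\{e : e\mid n\}$ and $pD=\{pe: e\mid n\}$; writing $t=|D|$ for the number of divisors of $n$, these $2t$ numbers are distinct because each element of $pD$ exceeds $n\ge \max D$. Hence every coalition of the $pn$-system is uniquely a pair $(A,B)$ with $A,B\subseteq D$, where $A$ is the set of chosen ordinary divisors and $B$ records the chosen large divisors via $pe\mapsto e$. Its total weight is $s(A)+p\,s(B)$, where $s(X)=\sum_{e\in X}e$. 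The total weight of the system is $(p+1)\sigma(n)$, and since all weights are integers, a coalition is winning precisely when its weight strictly exceeds $(p+1)\sigma(n)/2$.

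The heart of the argument is a short case analysis comparing $s(B)$ with $\sigma(n)/2$, using the bounds $0\le s(A)\le \sigma(n)<p$. If $s(B)>\sigma(n)/2$, then even for $A=\varnothing$ one has $p\,s(B)-(p+1)\sigma(n)/2 = p\bigl(s(B)-\tfrac{\sigma(n)}{2}\bigr)-\tfrac{\sigma(n)}{2} \ge \tfrac{p-\sigma(n)}{2}>0$, since integrality of subset sums forces $s(B)-\sigma(n)/2\ge \tfrac12$; thus the coalition wins for every $A$. Symmetrically, if $s(B)<\sigma(n)/2$, then even for $A=D$ the weight satisfies $s(A)+p\,s(B)-(p+1)\sigma(n)/2\le \tfrac{\sigma(n)-p}{2}<0$, so the coalition loses for every $A$. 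In the sole boundary case $s(B)=\sigma(n)/2$ (possible only when $\sigma(n)$ is even) the $p$-terms cancel and the coalition wins if and only if $s(A)>\sigma(n)/2$.

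Consequently the number of winning coalitions of $pn$ equals
\begin{equation*}
N_{>}\cdot 2^{t} + N_{=}\cdot N_{>},
\end{equation*}
where $N_{>}=\#\{B\subseteq D: s(B)>\sigma(n)/2\}$ and $N_{=}=\#\{B\subseteq D: s(B)=\sigma(n)/2\}$, with the convention $N_{=}=0$ when $\sigma(n)$ is odd. This expression depends only on the divisor structure of $n$ and not on the prime. Carrying out the identical computation for $m$ — whose sole requirement was $m>\sigma(n)$, which the hypothesis guarantees — produces the same value, so $pn$ and $mn$ have equally many winning coalitions. The only place any genuine care is needed is in verifying the three boundary inequalities so that the winning condition truly decouples from $p$; once the gap estimate $|s(B)-\sigma(n)/2|\ge\tfrac12$ is combined with $p,m>\sigma(n)$, the remainder is routine bookkeeping. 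I would also remark that the stated hypothesis $p,m>\sigma(n)+1$ is in fact slightly stronger than what this argument uses, so there is some slack to spare.
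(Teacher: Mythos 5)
Your proof is correct, but it is organized quite differently from the paper's. The paper constructs an explicit bijection $f$ from the winning coalitions of $mn$ to those of $pn$, sending each divisor $md_i$ to $pd_i$ and fixing the divisors of $n$; the substance is in showing $f$ is surjective, i.e.\ that if a pair $(A,B)$ encodes a winning coalition for $pn$ then it also encodes one for $mn$. The paper establishes this by rearranging the inequality $p\beta+\alpha\ge q_{pn}$ into a lower bound on $p$ of the form $\frac{\sigma(n)+2-2\alpha}{2\beta-\sigma(n)}$, observing this bound is at most $\sigma(n)+2$, and concluding that $m$ satisfies it as well. Your route instead computes the number of winning coalitions of $pn$ in closed form, $N_{>}\cdot 2^{t}+N_{=}\cdot N_{>}$, and observes that the answer is independent of the adjoined prime. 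Both arguments rest on the same decoupling phenomenon --- for a prime exceeding $\sigma(n)$ the winning condition is governed by how $s(B)$ compares with $\sigma(n)/2$, with the small divisors mattering only in the boundary case --- but your explicit three-way case analysis is cleaner precisely where the paper is delicate: dividing by $2\beta-\sigma(n)$ requires knowing that quantity is positive, and the boundary case $2\beta=\sigma(n)$ (where winning is decided by the small divisors alone) needs separate treatment, which your $N_{=}\cdot N_{>}$ term supplies and the paper's algebraic manipulation does not explicitly address. On the other hand, the paper's bijection carries more structural information than a cardinality count --- it matches coalitions one-to-one, which is what one would want toward the conjectured equality of the power indices stated just before the proposition --- whereas your argument yields only the equality of counts; for the proposition as stated, both suffice.
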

\begin{proof} 
Let the divisor voting system of $n$ be $\{1, d_1, d_2,\dots, d_k, n\}$, where each $d_i$ is a proper divisor of $n$. Since $p$ and $m$ are primes, the divisor voting systems of $pn$ and $mn$, $S_{pn}$ and $S_{mn}$ respectively, are the following:

\begin{center}
    $S_{pn}=\{1, d_1,\dots, d_k, n, p, pd_1, pd_2, \dots, pd_k, pn\}$
\end{center}
\begin{center}
    $S_{mn}=\{1, d_1,\dots, d_k, n, m, md_1, md_2, \dots, md_k, mn\}$.
\end{center}

Let $W_{pn}$ be the set of winning coalitions in $S_{pn}$ and let $W_{mn}$ be the set of all winning coalitions in $S_{mn}$. In order to prove that $W_{pn}$ and $W_{mn}$ have the same number of coalitions, we will prove that there exists a bijection between the two sets.

Let $f\colon W_{mn} \rightarrow W_{pn}$ be the function defined by 
\begin{equation*}
f(\{d_{j_1},d_{j_2},\dots, d_{j_l}, md_{c_1},md_{c_2},\dots, md_{c_s}\}) = \{d_{j_1},d_{j_2},\dots, d_{j_l}, pd_{c_1},pd_{c_2},\dots, pd_{c_s}\}. 
\end{equation*}

Note that $f$ is injective, since if $$\{d_{j_1},d_{j_2},\dots, d_{j_l}, pd_{c_1},pd_{c_2},\dots, pd_{c_s}\} = \{d_{j_1}',d_{j_2}',\dots, d_{j_l}', pd_{c_1}',pd_{c_2}',\dots, pd_{c_s}'\},$$ then we have  $$\{d_{j_1},d_{j_2},\dots, d_{j_l}, md_{c_1},md_{c_2},\dots, md_{c_s}\} = \{d_{j_1}',d_{j_2}',\dots, d_{j_l}', md_{c_1}',md_{c_2}',\dots, md_{c_s}'\}.$$

Now we prove that $f$ is surjective. Suppose we have a winning coalition $G$ in $S_{pn}$. Then we will prove that there exists a winning coalition $H$ in $S_{mn}$ such that $f$ maps $H$ to $G$. 

$G$ is of the form
$[d_j, d_{j+1},\dots, d_{j+k}, pd_c,pd_{c+1},\dots, pd_{c+r}]$, where each $d_i$ is a divisor of $n$. Then, based on our construction of $f$, we know that $H$ should be of the form

\begin{center}
$[d_j, d_{j+1},\dots, d_{j+k}, md_c,md_{c+1},\dots, md_{c+r}]$.
\end{center}

We will now prove that $H$ is a winning coalition.

Let $q_{pn}$ be the quota of $S_{pn}$. Then 
\begin{equation*}
q_{pn}=\dfrac{p\sigma{(n)}+\sigma{(n)}}{2}+1.
\end{equation*}

Similarly, for the quota of $S_{mn}$ we have
\begin{equation*}
q_{mn}=\dfrac{m\sigma{(n)}+\sigma{(n)}}{2}+1.
\end{equation*}

Let $w_1$ be the total voting power of $G$. Since $G$ is a winning coalition, we have 
\begin{equation}
w_1 \geq q_{pn} = \dfrac{p\sigma{(n)}+\sigma{(n)}}{2}+1.
\end{equation}

Let $$\beta=\sum_{n=1}^{k} d_{j+n}$$ and let $$\alpha =\sum_{n=1}^{r} d_{c+n}.$$
Then we can rewrite $w_1$ as $p\beta+\alpha$. By substituting $p\beta+\alpha$ for $w_1$ we have
\begin{equation}
    p\beta+\alpha \geq \dfrac{p\sigma{(n)}+\sigma{(n)}}{2}+1.
\end{equation}
Hence
\begin{equation}
p \geq\dfrac{\sigma({n})+2-2\alpha}{2\beta-\sigma({n})}.
\end{equation}

The maximum value of the right-hand side of 2.3 occurs when the denominator is 1 and $\alpha$ is 0. Hence the right-hand side of 2.3 is at most $\sigma(n)+2$. Since $m > \sigma(n) +1$, we have
\begin{equation}
m \geq\dfrac{\sigma({n})+2-2\alpha}{2\beta-\sigma({n})}
\end{equation}

By rearranging 2.4, we have
\begin{equation*}
    m\beta+\alpha \geq \dfrac{m\sigma{(n)}+\sigma{(n)}}{2}+1=q_{mn}.
\end{equation*}
The total voting power of $H$ is also $m\beta + \alpha$ due to its construction. Hence $H$ is a winning coalition and $f$ is surjective. 
Therefore, since there exists a bijection between $W_{mn}$ and $W_{pn}$, the divisor voting systems of $mn$ and $pn$ have the same number of winning coalitions.
\qedhere 
\end{proof}

\section{Fixed Points}

\subsection{Banzhaf}
The general Banzhaf power indices for an arbitrary distribution of weights is quite difficult to express, so we will only consider the weights of the form $(a, b, b, \dots , b)$ where there is one player with a vote of $a$ and $m$ players have a vote of $b$, and the weights sum to $1$. We refer to people with a vote of $a$ as type $A$ and people with a vote of $b$ as Type $B$. Then we have $a = 1 - mb$. Each winning coalition either contains the player of Type $A$, or it does not.

\begin{itemize}
    \item If a winning coalition does not contain the player of Type $A$, then there must be at least $p$ players of type $B$, where $pb > \frac{1}{2}$. Since $p$ is an integer, this simplifies to $p \geq \lceil \frac{1}{2b} \rceil$. Since all players in our coalition have the same voting power, we must have $p = \lceil \frac{1}{2b} \rceil$ for any player to be a critical player. Then each player of type $B$ is a critical player exactly $\binom{m-1}{\lceil \frac{1}{2b} \rceil - 1}$ times because we can pick $\lceil \frac{1}{2b} \rceil - 1$ of the remaining $m-1$ players of type $B$ to be the other players on our coalition.
    \item Now suppose a winning coalition contains the player of Type $A$. Then if are $q$ players of type $B$, $q$ must satisfy $1 - mb + qb > \frac{1}{2}$ and $qb < \frac{1}{2}$. $q$ is also an integer, so this simplifies to $m - \lfloor \frac{1}{2b} \rfloor \leq q \leq \lfloor \frac{1}{2b} \rfloor$. Then the player of Type $A$ is critical $\binom{m}{m - \lfloor \frac{1}{2b} \rfloor} + \binom{m}{m - \lfloor \frac{1}{2b} \rfloor + 1} + \dots + \binom{m}{\lfloor \frac{1}{2b} \rfloor}$ times, since we pick any $q$ players of type $B$. Furthermore, players of type $B$ can also be critical when $q$ is a minimum, so players of Type $B$ are critical an additional $\binom{m-1}{m - \lfloor \frac{1}{2b} \rfloor - 1}$ times.
\end{itemize}

Therefore the player of type A has Banzhaf index 
\[
 \frac{\binom{m}{m - \lfloor \frac{1}{2b} \rfloor} + \binom{m}{m - \lfloor \frac{1}{2b} \rfloor + 1} + \dots + \binom{m}{\lfloor \frac{1}{2b} \rfloor}}{m \left(\binom{m-1}{\lceil \frac{1}{2b} \rceil - 1} + \binom{m-1}{m - \lfloor \frac{1}{2b} \rfloor - 1} \right) + \left[\binom{m}{m - \lfloor \frac{1}{2b} \rfloor} + \binom{m}{m - \lfloor \frac{1}{2b} \rfloor + 1} + \dots + \binom{m}{\lfloor \frac{1}{2b} \rfloor} \right]}
\] 
while the player of type B has Banzhaf index
\[
 \frac{\binom{m-1}{\lceil \frac{1}{2b} \rceil - 1} + \binom{m-1}{m - \lfloor \frac{1}{2b} \rfloor - 1} }{m \left(\binom{m-1}{\lceil \frac{1}{2b} \rceil - 1} + \binom{m-1}{m - \lfloor \frac{1}{2b} \rfloor - 1} \right) + \left[\binom{m}{m - \lfloor \frac{1}{2b} \rfloor} + \binom{m}{m - \lfloor \frac{1}{2b} \rfloor + 1} + \dots + \binom{m}{\lfloor \frac{1}{2b} \rfloor} \right]}.
\]
We need the first expression to be equal to $1-mb$ and the second expression to be equal to $b$ (but note that it is sufficient to investigate one of these, since by symmetry if one of them is true then the other one becomes automatically true). It is difficult to find classes of solutions solely based on this equation, so we will investigate fixed points of both Banzhaf and Shapley-Shubik indices instead, in Section $3.3$. 

\subsection{Shapley-Shubik}
First let us examine the primitive non-trivial fixed points of $[1/2_s: a, b, b, \ldots, b]$, where the total sum of the votes is 1. We can now say that players of type $A$ have a weight of $a$ and players of type $B$ have a weight of $b$. When trying to compute the power of type $A$, suppose that there are $p$ players of type $B$ in the permutation before the person of type $A$. If the players of type $A$ is a critical player, then we know that $pb \leq \frac{1}{2}$ and $pb + a > \frac{1}{2}$. We know that $a = 1 - mb$, so we can substitute that into the second inequality to get $pb + (1 - mb) > \frac{1}{2}$. Solving these inequalities, we get $m - \frac{1}{2b} < p \leq \frac{1}{2b}$. We know that $p$ must be an integer, so assuming $\frac{1}{2b} \not \in \mathbb{Z}$, we can make the bounds stricter, which gives us $m - \lfloor \frac{1}{2b} \rfloor \leq p \leq \lfloor \frac{1}{2b} \rfloor$. To figure out how many different permutations correspond to each value of $p$, we can see that there must be $m!$ different permutations for how to arrange the players of type $B$, and then we just need to put the person of type $A$ in the appropriate location. Therefore, $m!(\lfloor \frac{1}{2b} \rfloor - (m - \lfloor \frac{1}{2b} \rfloor) + 1) = m!(2\lfloor \frac{1}{2b} \rfloor - m + 1)$ counts the number of permutations where type $A$ is the critical player, and $\frac{m!(2\lfloor \frac{1}{2b} \rfloor - m + 1)}{(m + 1)!} = \frac{2\lfloor \frac{1}{2b} \rfloor - m + 1}{m + 1}$ gives the power of the person of type $A$ if $\frac{1}{2b} \not \in \mathbb{Z}$. 

On the other hand, if $\frac{1}{2b} \in \mathbb{Z}$, then $m!\left(\frac{1}{2b} - (m - \frac{1}{2b})\right) = m!(\frac{1}{b} - m)$ represents the number of permutations such that the person of type $A$ is a critical player. This means that $\frac{1-mb}{b(m + 1)}$ is the power index of the person of type $A$. But note that this equals $1-mb$ if and only if $b= \frac{1}{m+1}$ in which case all players have the same power, i.e. we get a trivial fixed point!

\begin{proposition}Let $C$ be a positive integer. If $m = 2k - 1$, then for large enough $k$ $($depending only on $C)$, $b = \frac{k - C}{2k^2- k}$ and $a = 1 - (2k - 1)b$ yield a non-trivial fixed point of the form $[1/2_s: a, b, \dots, b]$. Moreover, all non-trivial fixed points are of this form.
\end{proposition}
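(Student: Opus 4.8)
The plan is to reduce everything to the single scalar equation coming from the type-$A$ player, and then to treat the two directions of the claim separately.

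First I would record the governing equation. By the computation preceding the statement, whenever $\frac{1}{2b}\notin\mathbb{Z}$ the Shapley--Shubik power of the type-$A$ player equals $\frac{2\lfloor 1/(2b)\rfloor-m+1}{m+1}$, while the case $\frac{1}{2b}\in\mathbb{Z}$ has already been shown to force a trivial point. Since the $m+1$ powers sum to $1$ and the weights $a,b,\dots,b$ also sum to $1$, the single equation ``type-$A$ power $=a=1-mb$'' is by itself equivalent to being a fixed point (the type-$B$ equation then follows automatically). Writing $L=\lfloor 1/(2b)\rfloor$ and clearing denominators, this equation rearranges to the clean relation
\[
b=\frac{2(m-L)}{m(m+1)},\qquad L=\Big\lfloor \tfrac{1}{2b}\Big\rfloor ,
\]
which I would use as the working characterization of non-trivial fixed points throughout.

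For the first assertion I would substitute $m=2k-1$ and $b=\frac{k-C}{2k^2-k}=\frac{k-C}{k(2k-1)}$ and verify the relation above. The only genuine computation is locating the floor: a polynomial division gives
\[
\frac{1}{2b}=\frac{k(2k-1)}{2(k-C)}=(k+C-1)+\frac12+\frac{(2C-1)C}{2(k-C)} ,
\]
so the quantity lies strictly between the consecutive integers $k+C-1$ and $k+C$ exactly when the trailing term is less than $\tfrac12$, i.e. precisely when $k>2C^2$. For such $k$ we get $\lfloor 1/(2b)\rfloor=k+C-1$ and $\frac{1}{2b}\notin\mathbb{Z}$; feeding $L=k+C-1$ back into the working relation returns the same $b$, and both $a$ and the type-$A$ power collapse to $C/k$, confirming the fixed point. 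Primitivity is clear since $a=C/k>0$ and $b>0$ for $k>C$, and non-triviality follows because $a=\frac{2C}{2k}$ can never equal the trivial common value $\frac{1}{m+1}=\frac{1}{2k}$. This direction is routine once the division identity is in hand.

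The substance is the converse (the ``moreover''). Starting from a non-trivial fixed point I would invoke the working relation, set $k=\frac{m+1}{2}$ and $C=L-k+1$ so that formally $b=\frac{k-C}{2k^2-k}$, and then must show that $k$ is a positive integer (equivalently that $m$ is \emph{odd}), that $C\ge 1$, and that $k>2C^2$. The positivity constraints $a>0$, $b>0$ confine $L$ to the band $\tfrac{m+1}{2}\le L\le m-1$, and, writing $j:=m-L\in\{1,\dots,\lfloor m/2\rfloor\}$, the floor condition reduces to the integer inequality $m^2+(1-4j)m+4j^2<4j$ (its lower companion holding automatically, the discriminant $1-8j$ being negative); analysing the admissible $m$ for each $j$ is the heart of the argument. \emph{This parity step is the main obstacle.} The reduced relation does not on its face exclude even $m$, so before attempting a general argument I would test small even cases directly (for instance $m=10$ and $m=14$), since if any of these produced a non-trivial fixed point the stated dichotomy would have to be amended. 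Establishing that every non-trivial fixed point genuinely forces $m$ odd --- and only then extracting $C$ and the threshold $k>2C^2$ --- is exactly the point at which the proof must be made airtight, and is the step I would scrutinise first.
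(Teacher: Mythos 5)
Your existence half follows the paper's own route: write down the type-$A$ fixed-point equation, locate $\lfloor 1/(2b)\rfloor = k+C-1$, and substitute back. Your version is actually sharper than the paper's, since you extract the explicit threshold $k>2C^2$ from the division
$\frac{1}{2b}=(k+C-1)+\tfrac12+\tfrac{2C^2-C}{2(k-C)}$
(which also shows $\tfrac{1}{2b}\notin\mathbb{Z}$ for $C\ge 1$), whereas the paper only says ``for large enough $k$.'' The reduction to the single equation for the type-$A$ player, justified by the powers and weights both summing to $1$, is also the paper's (implicit) logic. That half is fine.

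The problem is the converse, where you have manufactured an obstacle that is not in the statement and then left it unresolved. The hypothesis ``$m=2k-1$'' governs the entire proposition, including the ``moreover'' clause: the claim is that every non-trivial fixed point \emph{with an odd number $m=2k-1$ of type-$B$ players} arises from some positive integer $C$, not that non-trivial fixed points force $m$ to be odd. Indeed the latter is false --- the very next proposition in the paper exhibits non-trivial fixed points with $m=2k$, namely $b=\frac{k-C}{2k^2+k}$ --- so the ``parity step'' you flag as the heart of the argument cannot be carried out, and your proposed tests at $m=10,14$ would confirm this. Under the correct reading the converse is short and is exactly what survives of your plan once the parity analysis is deleted: $L=\lfloor 1/(2b)\rfloor$ is an integer, so set $C=L-k+1$; your working relation $b=\frac{2(m-L)}{m(m+1)}$ then gives $b=\frac{k-C}{2k^2-k}$; and $C\le 0$ is excluded because it forces $b\ge\frac{1}{2k-1}$, i.e.\ either the trivial equal-weights point or an inadmissible system with $(2k-1)b>1$. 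As written, your proof is incomplete at its self-declared main step; reread the quantifiers and the gap closes.
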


\begin{proof}
If $b = \frac{k - C}{2k^2 - k}$, then $\frac{1}{2b} \neq \lfloor \frac{1}{2b} \rfloor$. We calculated the power of the person of type $A$ to be $\frac{2\lfloor \frac{1}{2b} \rfloor - m + 1}{m + 1}$, so $\frac{2\lfloor \frac{1}{2b} \rfloor - (2k - 1) + 1}{2k} = 1 - 2kb$ is necessary for this specific configuration to be a fixed point. If $b = \frac{k - C}{2k^2 - k}$, then $\left\lfloor \frac{1}{2 \cdot \frac{k - C}{2k^2 - k}} \right\rfloor = k + C - 1$ for large enough $k$. Substituting, we get

\begin{align*}
    \frac{2\lfloor \frac{1}{2b} \rfloor - 2k + 2}{2k} &= \frac{2(k + C - 1) - 2k + 2}{2k} = \frac{C}{k} \\
    &= \frac{C(2k - 1)}{k(2k - 1} = \frac{(2k^2 - k) - (k - C)(2k - 1)}{2k^2 - k} \\
    &= 1 - (2k - 1)\frac{k - C}{2k^2 - k} \\
    &= 1 - mb
\end{align*}

Therefore, the calculated value for type $A$'s weight is equal to $a$, and similarly the original value for type $B$'s weight is equal to $b$. All examples of this form must are indeed non-trivial fixed points. \qedhere

To show that these classify all the fixed points of $[1/2_s: a, b, b, \dots]$, we note that $\lfloor \frac{1}{2b} \rfloor$ must be an integer, so we know there exists an integer $C$ such that $\lfloor \frac{1}{2b} \rfloor = k + C - 1$. Substituting this in and solving for $b$, we get $b = \frac{k - C}{2k^2 - k}$. We also know that $C > 0$ because if $C \leq 0$, then $\frac{k - C}{2k^2 - k} \geq \frac{1}{2k - 1}$, so this value of $b$ represents either a trivial case or an impossible one where $(2k - 1)b > 1$. Since there are no non-trivial fixed points in the other case, this classifies all the fixed points if $m = 2k - 1$ for $[1/2_s: a, b, \dots, b]$.
\end{proof}

\begin{proposition}
Let $C$ be a positive integer. If $m = 2k$, then for large enough $k$, $b = \frac{k - C}{2k^2 + k}$ and $a = 1 - 2kb$ gives a non-trivial fixed point of the form $[1/2_s: a, b, \ldots, b]$. Moreover, all non-trivial fixed points are of this form.
\end{proposition}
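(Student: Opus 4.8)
The plan is to mirror the proof of the preceding proposition (the $m = 2k-1$ case), the only real change being the arithmetic of the floor function. Recall from the discussion opening Section 3.2 that, provided $\frac{1}{2b} \notin \mathbb{Z}$, the Shapley--Shubik power of the type-$A$ player in $[1/2_s : a, b, \ldots, b]$ with $m$ type-$B$ players equals $\frac{2\lfloor 1/(2b)\rfloor - m + 1}{m+1}$, and a fixed point requires this to equal $a = 1 - mb$. For $m = 2k$ this reads
\[
\frac{2\lfloor \tfrac{1}{2b}\rfloor - 2k + 1}{2k+1} = 1 - 2kb.
\]
So the whole argument reduces to (i) evaluating $\lfloor 1/(2b) \rfloor$ for the proposed $b$, (ii) substituting and checking equality, and (iii) running the substitution backwards for the classification.

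First I would establish the key floor identity. For $b = \frac{k-C}{2k^2+k} = \frac{k-C}{k(2k+1)}$ we have $\frac{1}{2b} = \frac{2k^2+k}{2(k-C)}$, and polynomial division gives $\frac{1}{2b} = k + \frac{(2C+1)k}{2(k-C)}$. Since the fractional contribution $\frac{(2C+1)k}{2(k-C)} = \frac{2C+1}{2}\cdot\frac{1}{1-C/k}$ tends to $C + \tfrac12$ from above as $k \to \infty$ with $C$ fixed, for large enough $k$ (concretely $k > 2C^2 + 2C$ suffices) it lies strictly inside the open interval $(C, C+1)$. Hence $\frac{1}{2b} \in (k+C, k+C+1)$, so $\lfloor \frac{1}{2b} \rfloor = k + C$ and in particular $\frac{1}{2b} \notin \mathbb{Z}$, placing us in the regime where the power formula above is valid.

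Second, substituting $\lfloor \frac{1}{2b}\rfloor = k + C$ makes the left-hand side $\frac{2(k+C) - 2k + 1}{2k+1} = \frac{2C+1}{2k+1}$, while the right-hand side is $1 - 2k \cdot \frac{k-C}{k(2k+1)} = \frac{(2k+1) - 2(k-C)}{2k+1} = \frac{2C+1}{2k+1}$; the two agree, so the configuration is a fixed point. It is non-trivial because $b = \frac{k-C}{k(2k+1)}$ equals the equal-weight value $\frac{1}{2k+1}$ only when $C = 0$. As noted by the symmetry remark in Section 3.1, matching the type-$A$ weight automatically forces the type-$B$ weights to match, so no separate verification is needed for the type-$B$ players. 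For the ``moreover'' direction I would argue that every non-trivial fixed point arises this way: if $\frac{1}{2b}\in\mathbb{Z}$ the computation recalled at the end of the Shapley--Shubik preamble yields only the trivial point, so a non-trivial fixed point has $\frac{1}{2b}\notin\mathbb{Z}$; then $\lfloor \frac{1}{2b}\rfloor$ is an integer, which I write as $k+C$, and imposing the fixed-point equation and solving for $b$ returns exactly $b = \frac{k-C}{2k^2+k}$. Finally $C$ must be a positive integer, since $C \le 0$ forces $b \ge \frac{1}{2k+1}$, giving either the trivial case $b = \frac{1}{2k+1}$ (when $C=0$) or a degenerate/invalid configuration, and $C \ge k$ is ruled out by $b > 0$.

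The main obstacle --- and the only place the ``large enough $k$'' hypothesis is genuinely needed --- is the floor computation in the first step: one must pin down that the fractional part of $\frac{1}{2b}$ lands strictly inside $(C, C+1)$ rather than straddling an integer, which can fail for small $k$ and is precisely what restricts the statement to large $k$. Everything after that is a routine algebraic identity, and the classification direction is just the same identity read in reverse together with the endpoint bookkeeping on $C$.
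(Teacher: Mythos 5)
Your proposal is correct and follows essentially the same route as the paper's proof: verify $\lfloor \frac{1}{2b} \rfloor = k + C$ for large $k$, substitute into the fixed-point equation $\frac{2\lfloor 1/(2b)\rfloor - 2k + 1}{2k+1} = 1 - 2kb$, and reverse the substitution for the classification. The only difference is that you make the ``large enough $k$'' threshold explicit (showing $k > 2C^2 + 2C$ suffices for the floor computation), which the paper leaves implicit.
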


\begin{proof}
Since $\frac{1}{2b} \not \in \mathbb{Z}$, we know that $\frac{2\lfloor \frac{1}{2b} \rfloor - 2k + 1}{2k + 1} = 1 - 2kb$ is necessary for this value of $b$ to be a fixed point. If we plug in $b = \frac{k - C}{2k^2 + k}$ into $\lfloor \frac{1}{2b} \rfloor$, we get that this equals $k + C$ for large enough $k$. Substituting this in, we get

\begin{align*}
    \frac{2\lfloor \frac{1}{2b} \rfloor - 2k + 1}{2k + 1} &= \frac{2C + 1}{2k + 1} \\ 
    &= \frac{2kC + k}{2k^2 + k} \\ 
    &= \frac{(2k^2 + k) - 2k(k - C)}{2k^2 + k} \\
    &= 1 - 2kb
\end{align*}

Therefore, the calculated value for type $A$'s weight is equal to $a$, and similarly the original value for type $B$'s weight is equal to $b$. All examples of this form are hence non-trivial fixed points. 
\qedhere 
To show that these classify all the fixed points of $[1/2_s: a, b, b, \dots]$, we note that $\lfloor \frac{1}{2b} \rfloor$ must be an integer, so we know there exists an integer $C$ such that $\lfloor \frac{1}{2b} \rfloor = k + C$. Substituting this in and solving for $b$, we get $b = \frac{k - C}{2k^2 + k}$. We also know that $C > 0$ because if $C \leq 0$, then $\frac{k - C}{2k^2 + k} \geq \frac{1}{2k + 1}$, so this value of $b$ represents either a trivial case or an impossible one where $2kb > 1$. Since there are no non-trivial fixed points in the other case, this classifies all the fixed points if $m = 2k$ for $[1/2_s: a, b, b, \dots]$.
\end{proof}

We can now take a look at weighted voting systems in the form of $(a, a, b, \ldots, b)$, which we can analyze in a similar way to the previous parts. Again letting $m$ be the number of players of type $B$, we can then split into two cases.

\begin{enumerate}
    \item $m = 2k$
    
    In this case, there are two possibilities for a winning coalition in which the critical player is in type $A$. Either the other player of type $A$ is in the winning coalition or they are not. We can calculate how many permutations fit into one of the categories and then add them together to get the total number of permutations where a person of type $A$ is the critical player when trying to compute Shapley-Shubik.
    
    \begin{itemize}
        \item Only one player of type $A$ in the winning coalition.
        
        Let $p$ be the number of players of type $B$ in the winning coalition. We know that $pb \leq \frac{1}{2}$ and $pb + a > \frac{1}{2}$. Solving for $p$ and substituting $a = \frac{1 - 2kb}{2}$, we get the inequalities $p \leq \frac{1}{2b}$ and $p > k$. Since $p$ must be an integer, we can make stricter bounds by saying $k + 1 \leq p \leq \lfloor \frac{1}{2b} \rfloor$.
        
        Now, for a given value of $p$, we need to figure out how many permutations work with that value of $p$. Out of the $2k + 2$ people voting, the first $p + 1$ spots are part of the winning coalition in a given permutation, so the other player of type $A$ must be in the remaining $2k - p + 1$ locations. There are $(2k)!$ ways to arrange the rest of the players, so in total, there are $m!(2k - p + 1)$ permutations associated with a given value of $p$. We then need to sum this expression using the bounds from earlier to get the total number of permutations in this case to be $(2k)!\sum_{p = k + 1}^{\lfloor 1/(2b) \rfloor}(2k - p + 1) = \frac{(2k)!}{2}(\lfloor \frac{1}{2b} \rfloor - k)(3k - \lfloor \frac{1}{2b} \rfloor + 1)$.
        
        \item Both players of type $A$ are in the winning coalition.
        
        Again, let $p$ be the number of players of type $B$ in the winning coalition. We know that $pb + a \leq \frac{1}{2}$ and $pb + 2a > \frac{1}{2}$. Solving for $p$ and substituting $a = \frac{1 - 2kb}{2}$, we get the inequalities $p \leq k$ and $p > 2k - \frac{1}{2b}$. If we assume that $\frac{1}{2b} \not \in \mathbb{Z}$, then since $p$ must be an integer, we can write $2k - \lfloor \frac{1}{2b} \rfloor \leq p \leq k$. For each value of $p$, there are $p + 1$ places where we can place the other player of type $A$ and $(2k + 1)!$ ways to place the rest of the players, so there are $(2k)!\sum_{p = 2k - \lfloor \frac{1}{2b} \rfloor}^k (p + 1) = \frac{1}{2}\left(\lfloor \frac{1}{2b} \rfloor - k + 1\right)\left(3k - \lfloor \frac{1}{2b} \rfloor + 2\right)$.
     \end{itemize}
    
    Combining these two cases, along with the fact that there are $(2k + 2)!$ total permutations, we get that the power of a player of type $A$ must be $$\frac{\frac{1}{2}\left(\lfloor \frac{1}{2b} \rfloor - k\right)\left(3k - \lfloor \frac{1}{2b} \rfloor + 1\right) + \frac{1}{2}\left(\lfloor \frac{1}{2b} \rfloor - k + 1\right)\left(3k - \lfloor \frac{1}{2b} \rfloor + 2\right)}{(2k + 1)(2k + 2)},$$ and a value of $b$ is a fixed point if and only if this expression equals $\frac{1 - 2kb}{2}$. Using this equation, we can generate the following solutions for $b$.

    \begin{center}
    \begin{tabular}{c|l}
    $k = 1$ & $b = 1/3$ \\ \hline
    $k = 2$ & $b = 2/15, b = 1/5$ \\ \hline
    $k = 3$ & $b = 3/28, b = 1/7$ \\ \hline 
    $k = 4$ & $b = 13/180, b = 4/45, b = 1/9$ \\ \hline 
    $k = 5$ & $b = 7/110, b = 5/66, b = 1/11$ 
    \end{tabular}
    \end{center}
    
    We can verify that $b = \frac{1}{2k + 1}$ and $b = \frac{k}{(k + 1)(2k + 1)}$ are classes of primitive non-trivial fixed points for large enough $k$ by showing that both sides equal each other when we substitute the values of $b$ in.
    
    \item $m = 2k + 1$
    
    We again split into different cases depending on the number of players of type $A$ in the winning coalition.
    
    \begin{itemize}
    \item Only one player of type $A$ in the winning coalition.
    
    Let $p$ be the number of players of type $B$ in the winning coalition. We know that $pb \leq \frac{1}{2}$ and $pb + a > \frac{1}{2}$. Solving for $p$ and substituting $a = \frac{1 - (2k  +1)b}{2}$, we get the inequalities $p \leq \frac{1}{2b}$ and $p > k$. Since $p$ must be an integer, we can make stricter bounds by saying $k + 1 \leq p \leq \lfloor \frac{1}{2b} \rfloor$.
    
    Now, for a given value of $p$, we need to figure out how many permutations work with that value of $p$. Out of the $2k + 3$ people voting, the first $p + 1$ spots are part of the winning coalition in a given permutation, so the other player of type $A$ must be in the remaining $2k - p + 2$ locations. There are $(2k + 1)!$ ways to arrange the rest of the players, so in total, there are $(2k + 1)!(2k - p + 2)$ permutations associated with a given value of $p$. We then need to sum this expression using the bounds from earlier to get the total number of permutations in this case to be $(2k + 1)!\sum_{p = k + 1}^{\lfloor 1/(2b) \rfloor}(2k - p + 2) = \frac{(2k + 1)!}{2}(\lfloor \frac{1}{2b} \rfloor - k)(3k - \lfloor \frac{1}{2b} \rfloor + 3)$.
    
    \item Both players of type $A$ are in the winning coalition.
    
    Again, let $p$ be the number of players of type $B$ in the winning coalition. We know that $pb + a \leq \frac{1}{2}$ and $pb + 2a > \frac{1}{2}$. Solving for $p$ and substituting $a = \frac{1 - (2k + 1)b}{2}$, we get the inequalities $p \leq k$ and $p > 2k + 1 - \frac{1}{2b}$. If we assume that $\frac{1}{2b} \not \in \mathbb{Z}$, then since $p$ must be an integer, we can write $2k + 1 - \lfloor \frac{1}{2b} \rfloor \leq p \leq k$. For each value of $p$, there are $p + 1$ places where we can place the other player of type $A$ and $(2k + 1)!$ ways to place the rest of the players, so there are $(2k + 1)!\sum_{p = 2k + 1 - \lfloor \frac{1}{2b} \rfloor}^k (p + 1) = \frac{1}{2}\left(\lfloor \frac{1}{2b} \rfloor - k\right)\left(3k - \lfloor \frac{1}{2b} \rfloor + 3\right)$.
\end{itemize}
Combining these two cases, along with the fact that there are $(2k + 3)!$ total permutations, we get that the power of a player of type $A$ must be $$\frac{\left(\lfloor \frac{1}{2b} \rfloor - k\right)\left(3k - \lfloor \frac{1}{2b} \rfloor + 3\right)}{(2k + 1)(2k + 2)},$$ and a value of $b$ is a fixed point if and only if this expression equals $\frac{1 - (2k + 1)b}{2}$. Using this equation, we can generate the following solutions for $b$.
\begin{center}
\begin{tabular}{c|l}
$k = 1$ & $b = 2/15$ \\ \hline
$k = 2$ & $b = 3/35, b = 11/105$ \\ \hline
$k = 3$ & $b = 4/63, b = 11/126$ \\ \hline 
$k = 4$ & $b = 5/99, b = 31/495, b = 37/495$ 
\end{tabular}
\end{center}

\noindent And we can verify that $b = \frac{k + 1}{4(k + 1)^2 - 1}$ is a primitive fixed points for large enough $k$.
\end{enumerate}

\subsection{Both Banzhaf and Shapley-Shubik}
To find primitive nontrivial fixed points for both power indices, we can test Shapley-Shubik fixed points to see if they are still fixed points for Banzhaf. Consider the Shapley-Shubik fixed points in $\textbf{Proposition 3.1}$. To calculate Banzhaf power indices for this class of fixed points, we see that each winning coalition either contains the player of type $A$ or it does not.
\begin{itemize}
    \item Winning coalition does not contain player of type A: The winning coalition must contain at least $p$ players of type $B$, where $p(\frac{k-c}{2k^2-k}) > \frac{1}{2}$, or $p > \frac{2k^2-k}{2k-2c} = k + c - \frac{1}{2} + \frac{2c^2-c}{2k-2c}$. Since $p$ is an integer, we have $p \geq k + c$ for large enough $k$. In order for the coalition to have a critical player, we must have $p = k + c$, so each player of type $B$ is a critical player $\binom{2k-2}{k+c-1}$ times since we pick the other $p - 1$ players from the remaining $2k-2$ players of Type $B$. 
    \item Winning coalition contains player of type A:  The winning coalition must contain $q$ players of type $B$, where $q(\frac{k-c}{2k^2-k}) < \frac{1}{2}$ and $\frac{1}{k} + q(\frac{k-c}{2k^2-k}) > \frac{1}{2}$, which simplifies to $\frac{(k-2c)(2k-1)}{2k-2c} < q < \frac{2k^2-k}{2k-2c}$. The lower and upper bounds are equal to $k - c - \frac{1}{2} + \frac{c-2c^2}{2k-2c}$ and $k + c - \frac{1}{2} + \frac{2c^2-c}{2k-2c}$, respectively, so since $q$ is an integer, we have $k - c \leq q \leq k + c - 1$ for large enough $k$. Then the player of type $A$ is a critical player $\binom{2k-1}{k-c} + \binom{2k-1}{k-c+1} + \dots + \binom{2k-1}{k+c-1}$ times, where we choose $q$ players of type $B$ from the total $2k-1$. Furthermore, players of type $B$ are also critical players when $q = k-c$, so they are critical players $\binom{2k-2}{k-c-1}$ times. 
\end{itemize}
Thus the Banzhaf index for a player of type $B$ is given by 
\[\dfrac{\binom{2k-2}{k+c-1} + \binom{2k-2}{k-c-1}}{(2k-1)\left( \binom{2k-2}{k+c-1} + \binom{2k-2}{k-c-1} \right) + \binom{2k-1}{k-c} + \binom{2k-1}{k-c+1} + \dots + \binom{2k-1}{k+c-1}}.
\]
We want this expression to equal $\frac{k-c}{2k^2-k}$, which we found to only hold true when $c=1$.
\newline
\\
Now consider the Shapley-Shubik fixed points in $\textbf{Proposition 3.2}$. To calculate Banzhaf power indices for this class of fixed points, we see that each winning coalition either contains the player of type $A$ or it does not.
\begin{itemize}
    \item Winning coalition does not contain player of type A: The winning coalition must contain at least $p$ players of type $B$, where $p(\frac{k-c}{2k^2+k}) > \frac{1}{2}$, or $p > \frac{2k^2+k}{2k-2c} = k + c + \frac{1}{2} + \frac{2c^2+c}{2k-2c}$. Since $p$ is an integer, we have $p \geq k + c +1$ for large enough $k$. In order for the coalition to have a critical player, we must have $p = k + c+1$, so each player of type $B$ is a critical player $\binom{2k-1}{k+c}$ times since we pick the other $p - 1$ players from the remaining $2k-1$ players. 
    \item Winning coalition contains player of type A:  The winning coalition must contain $q$ players of type $B$, where $q(\frac{k-c}{2k^2+k}) < \frac{1}{2}$ and $\frac{1}{k} + q(\frac{k-c}{2k^2+k}) > \frac{1}{2}$, which simplifies to $\frac{(k-2c)(2k+1)}{2k-2c} < q < \frac{2k^2+k}{2k-2c}$. The lower and upper bounds are equal to $k - c + \frac{1}{2} - \frac{c+2c^2}{2k-2c}$ and $k + c + \frac{1}{2} + \frac{2c^2+c}{2k-2c}$, respectively, so since $q$ is an integer, we have $k - c +1\leq q \leq k + c $ for large enough $k$. Then the player of type $A$ is a critical player $\binom{2k}{k-c+1} + \binom{2k}{k-c+2} + \dots + \binom{2k}{k+c}$, where we choose $q$ players of type $B$ from the total $2k-1$. Furthermore, players of type $B$ are also critical players when $q = k-c+1$, so they are critical players $\binom{2k-1}{k-c}$ times. 
\end{itemize}
Thus the Banzhaf index for a player of type $B$ is given by 
\[ \dfrac{\binom{2k-1}{k+c} + \binom{2k-1}{k-c}}{(2k)\left( \binom{2k-1}{k+c} + \binom{2k-1}{k-c} \right) + \binom{2k}{k-c+1} + \binom{2k}{k-c+2} + \dots + \binom{2k}{k+c}}.
\]
We want this expression to equal $\frac{k-c}{2k^2+k}$, but we found no integer solutions for $c$ for which this is true.

\section{Future Research}
To expand the class of abundant numbers for which Proposition 2.1 is true, it would be helpful to prove that if the Shapley-Shubik and Banzhaf power indices differ in the divisor voting system of an abundant number $n$, then they also differ in the divisor voting system of $2^{k}p$ for a prime $p$ and various values of $k$. It would also be useful to finish our proof that the players in the divisor voting system of $pn$ have equal Banzhaf indices as well as equal Shapley-Shubik indices as those in $mn$.
By proving these statements, it would then be possible to prove Question $1.2.$ by inducting on the number of distinct prime divisors of an abundant number $n$.
In further research concerning fixed points, we could try strengthening the algebraic approach for fixed points to find more classes (e.g. with three types of players).

\section{Acknowledgements}
    
We would like to thank Miroslav Marinov for his guidance, mentorship, and feedback on this research paper. We thank Joshua Zelinsky for proposing this research project and for his helpful suggestions. We would like to thank the PROMYS program, especially Professor Fried and Steve Huang for the opportunity to work on this project, and the Clay Mathematical Institute for their support of the returning student program.

\section{Appendix}
Here is the code that we used to compute the power indices. 
\lstinputlisting[language=Python]{code.py}

\end{document}